 \newtheorem{theorem}{Theorem}[section]
 \newtheorem*{theorem*}{Main Theorem}
 \newtheorem{fact}[theorem]{Fact}
 \newtheorem{corollary}[theorem]{Corollary}
 \newtheorem{lemma}[theorem]{Lemma}
 \newtheorem{claim}[theorem]{Claim}
 \theoremstyle{definition}
 \newtheorem{definition}[theorem]{Definition}
 \theoremstyle{remark}
 \newtheorem{rem}[theorem]{Remark}
 \newtheorem*{example}{Example}
 \numberwithin{equation}{section}
\begin{document}

%
%
%
%
%
%
%
%
%

\title{Schlichting's Theorem for Approximate Subgroups}

\author{Tingxiang Zou}

\address{%
Institut Camille Jordan\\
Universit\'{e} Claude Bernard Lyon 1\\
69622 Villeurbanne Cedex, France}

\email{zou@math.univ-lyon1.fr}

\thanks{This author is supported by the China Scholarship Council and partially supported by ValCoMo (ANR-13-BS01-0006).}

\subjclass{Primary 20N99; Secondary 20A15}

\keywords{Approximate subgroups,  Schlichting's Theorem, commensurability.}

\date{July 29, 2019}

\begin{abstract}
We prove Schlichting's theorem for approximate subgroups: if $\mathcal{X}$ is a uniform family of commensurable approximate subgroups in some ambient group, then there exists an invariant approximate subgroup commensurable with $\mathcal{X}$.
\end{abstract}

\maketitle

\section{Introduction}

Schlichting's Theorem was first introduced in \cite{schlichting1980operationen} and was rediscovered and generalized by Bergman and Lenstra in \cite{lenstra1989subgroups}. It was further generalized to a wide class of structures including vector spaces, fields and sets by Wagner in \cite{wagner1998almost} with the right notion of commensurability in each case. We state the group case here:
\begin{fact}\textup{(}\cite[Theorem 4.2.4]{Wagner-Supersimple}\textup{)} Let $G$ be a group and $\mathcal{F}$ be a family of subgroups of $G$. Let $\mathbb{N}^{>0}$ be the set of positive natural numbers.\footnote{In this paper, we assume $0\in\mathbb{N}$.} If there is some $n\in\mathbb{N}^{>0}$ such that $[H:H\cap H']<n$ for all $H,H'\in\mathcal{F}$, then there is a subgroup $H_{\mathcal{F}}$ which is commensurable with every member of $\mathcal{F}$, and invariant under all automorphisms of $G$ which stabilize $\mathcal{F}$ set-wise. 

Moreover, $\bigcap \mathcal{F}\leq H_{\mathcal{F}}\leq \langle \mathcal{F}\rangle$ and $H_{\mathcal{F}}$ is a finite extension of finite intersections of groups in $\mathcal{F}$. In particular, if $\mathcal{F}$ is a family of definable groups, then $H_{\mathcal{F}}$ is also definable.
\end{fact}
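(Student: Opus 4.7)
The plan is to construct $N$ as a finite extension of a suitably chosen finite intersection of members of $\mathcal{F}$, in two stages: first extract a canonical finite intersection $K$ from $\mathcal{F}$, then enlarge it by finitely many cosets.

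The first ingredient is a uniform refinement estimate. Fix $H_0 \in \mathcal{F}$ and let $K = H_0 \cap H_1 \cap \cdots \cap H_m$ be any finite intersection of members of $\mathcal{F}$ contained in $H_0$. Then for any further $H \in \mathcal{F}$, the canonical injection $K/(K \cap H) \hookrightarrow H_0/(H_0 \cap H)$ forces $[K : K \cap H] < n$. Thus the family of such intersections is closed under refinement by $\mathcal{F}$ with uniformly bounded multiplicative step, and one sees quickly that $[K : K \cap H]$ and $[H : K \cap H']$ interact well across $H, H' \in \mathcal{F}$.

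The heart of the proof is to identify a finite intersection $K$ which is \emph{stable} in the following sense: for every $H \in \mathcal{F}$, both $[K : K \cap H]$ and $[H : K \cap H]$ are bounded by a constant depending only on $n$. I would approach this by trying to minimise the defect $d(K) := \sup_{H \in \mathcal{F}}[H : K \cap H]$ over finite intersections $K$, exploiting the refinement estimate to ensure that the minimisation terminates. Given such $K$, define $N$ to be the subgroup of $\langle \mathcal{F} \rangle$ generated by $K$ together with the finitely many cosets of $K$ that lie inside some $H \in \mathcal{F}$. The canonical nature of the construction gives invariance under any automorphism of $G$ stabilising $\mathcal{F}$ setwise; the inclusions $\bigcap \mathcal{F} \leq N \leq \langle \mathcal{F} \rangle$ are immediate; and definability transfers to $N$ because it is assembled as a finite Boolean combination of members of $\mathcal{F}$.

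The main obstacle I anticipate is the very existence of a suitable $K$. The descending chain of finite intersections need not stabilise in general --- consider $\bigoplus_{i \in \mathbb{N}} \mathbb{Z}/2\mathbb{Z}$ with $\mathcal{F}$ the collection of all index-$2$ subgroups, where the intersections shrink indefinitely --- so ``stable'' must be made precise through the defect function rather than via a minimal element. I would expect a compactness or Ramsey-type pigeonhole argument, exploiting that each index $[H_0 : H_0 \cap H]$ takes only finitely many possible values in $\{1, \dots, n-1\}$, to yield the required $K$ after finitely many refinement steps.
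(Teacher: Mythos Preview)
The paper does not give its own proof of this statement: it is quoted as a known fact from \cite[Theorem 4.2.4]{Wagner-Supersimple}, so there is nothing in the paper to compare your sketch against directly. (One could argue that the paper's proof of the Main Theorem, specialised to the case where every member of $\mathcal{X}$ is a genuine subgroup, yields a proof of this fact; but the paper does not make that specialisation explicit.)

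That said, your sketch has a concrete problem worth flagging. You propose to minimise
\[
d(K) := \sup_{H \in \mathcal{F}} [H : K \cap H]
\]
over finite intersections $K$ of members of $\mathcal{F}$. But this quantity is monotone in the wrong direction: if $K' \subseteq K$ then $K' \cap H \subseteq K \cap H$, so $[H : K' \cap H] \geq [H : K \cap H]$ and hence $d(K') \geq d(K)$. Thus the minimum of $d$ is already attained at any single $K = H_0 \in \mathcal{F}$, which is not canonical and gives you no invariance. Your own $\bigoplus_i \mathbb{Z}/2\mathbb{Z}$ example shows exactly this: every single $H_0$ already has $d(H_0) = 2$, and refining only makes things worse. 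So the ``minimise the defect'' step, as written, does no work.

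The actual argument (in Bergman--Lenstra and in Wagner's book, and mirrored in the paper's approximate-subgroup proof) runs a minimisation in the opposite spirit: one looks at a quantity of the form $\max_{H}[H : H \cap Z]$ where $Z$ ranges over finite \emph{unions} (or, in the group case, the subgroups they generate), and then among the minimisers one extracts a canonical object by a further extremal choice. Your second stage --- enlarging $K$ by ``the finitely many cosets of $K$ that lie inside some $H \in \mathcal{F}$'' --- is also not well-defined as stated (a coset of $K$ lies inside $H$ only if $K \leq H$, and there may be infinitely many such $H$), so the passage from $K$ to $N$ would need to be made precise before invariance or commensurability can be checked.
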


Approximate subgroups are subsets in an ambient group which are almost stable under products. They have a certain subgroup-like behaviour. The study of approximate subgroups has gained more attention since the work of Breuillard, Green and Tao around 2010 who gave a powerful structural description of finite approximate subgroups in \cite{breuillard2012structure}. 

We recall the definition of an approximate subgroups given in \cite{tao2008product}.
\begin{definition}
Let $K\in\mathbb{N}^{>0}$ be a parameter, $G$ be a group and $A\subseteq G$. We say that $A$ is a \textsl{$K$-approximate subgroup}, if
\begin{itemize}
\item
$1_G\in A$,
\item
$A$ is symmetric: $A=A^{-1}$; and
\item
there is a set $X\subseteq G$ with $|X|\leq K$ such that $AA\subseteq XA$.
\end{itemize}
\end{definition}

We can also consider a family of $K$-approximate subgroups which are uniformly ``close'' to each other and wonder if there is an invariant object.
\begin{definition}
Let $G$ be an ambient group, $X,Y$ approximate subgroups and $N\in\mathbb{N}^{>0}$. We say $X$ is \textsl{$N$-commensurable} with $Y$ if there are $Z_0,Z_1\subseteq G$ with $\max\{|Z_0|,|Z_1|\}\leq N$ such that $X\subseteq Z_0Y$ and $Y\subseteq Z_1X$.

A family $\mathcal{X}$ of approximate subgroups of $G$ is called \textsl{uniformly $N$-commensurable} if $X$ is $N$-commensurable with $Y$ for all $X,Y\in\mathcal{X}$. 

We call $\mathcal{X}$ a \textsl{uniform family of commensurable approximate subgroups} if there are $K,N\in\mathbb{N}^{>0}$ such that $\mathcal{X}$ is a family of uniformly $N$-commensurable $K$-approximate subgroups.

Let $\mathcal{X},\mathcal{Y}$ be uniform families of commensurable approximate subgroups and $H$ be an approximate subgroup. We say $\mathcal{X}$ (or $H$) is commensurable with $\mathcal{Y}$, if one/any member of $\mathcal{X}$ (or $H$ respectively) is commensurable with one/any member of $\mathcal{Y}$.
\end{definition}

Thus, Schlichting's theorem for approximate subgroups would state:

\begin{theorem*}\label{thm-schlichtingApprox}
If $\mathcal{X}$ is a uniform family of commensurable approximate subgroups in an ambient group $G$, then there is an approximate subgroup $H\subseteq G$ such that $H$ is commensurable with $\mathcal{X}$ and invariant under all automorphisms of $G$ stabilizing $\mathcal{X}$ set-wise.
\end{theorem*} 

We will prove this theorem in this paper. Indeed, suppose $\mathcal{X}$ is a family of uniformly $N$-commensurable $K$-approximate subgroups. We give an explicit construction of $H$ which is a $K_H$-approximate subgroup  $N_H$-commensurable with $\mathcal{X}$. Moreover, $K_H$ and $N_H$ only depend on $K$ and $N$ but not on $\mathcal{X}$. However, we cannot get an explicit bound on $K_H$ and $N_H$ based on $K$ and $N$. In conclusion, we have the following:
\begin{corollary}\label{cor}
Let $K,N\in\mathbb{N}^{>0}$. There is $L\in\mathbb{N}^{>0}$ such that for any family $\mathcal{X}$ of uniformly $N$-commensurable $K$-approximate subgroups, there is an $L$-approximate subgroup $H$ which is $L$-commensurable with $\mathcal{X}$ and invariant under all automorphisms of $G$ stabilizing $\mathcal{X}$ set-wise. 
\end{corollary}

\section{Examples and Preliminaries}\label{sec2}
Let us first look at an example.
\begin{example}
Let $\mathcal{U}$ be a non-principal ultrafilter on $\mathbb{N}$ and define the ultrapower $(\mathbb{Q}^*,\leq ,+):=\prod_{n\in\mathbb{N}}(\mathbb{Q},\leq,+)/\mathcal{U}$. Let $\mathcal{E}$ be the set of infinitesimals together with $0$, i.e. $$\mathcal{E}:=\{\epsilon\in\mathbb{Q}^*:-\frac{1}{n}<\epsilon<\frac{1}{n},\text{ for all }n\in\mathbb{N}^{>0}\}.$$ As $\mathcal{U}$ is non-principal, $\mathcal{E}$ is an infinite set. For $m,\epsilon,\eta\in\mathbb{Q}^*$ let $$X_{m,\epsilon,\eta}:=[-m-\epsilon-1,-m-\eta]\cup\{0\}\cup[m+\eta,m+\epsilon+1]\subseteq \mathbb{Q}^*.$$ Let $\mathcal{X}:=\{X_{m,\epsilon,\eta}:m\in\mathbb{N},\epsilon,\eta\in\mathcal{E}\}$. Then $\mathcal{X}$ is a family of uniformly $5$-commensurable $5$-approximate subgroups of $(\mathbb{Q}^*,+)$. For any $\epsilon\in \mathcal{E}$, the group automorphism $\sigma_{\epsilon}$ which maps $x$ to $(1+\epsilon)\cdot x$ stabilizes $\mathcal{X}$ set-wise.
\begin{claim}\label{claim}
$I:=\bigcup\{[-1-\epsilon,1+\epsilon]:\epsilon\in\mathcal{E}\}$ is an approximate subgroup commensurable with $\mathcal{X}$ and is invariant under all automorphisms of $(\mathbb{Q}^*,+)$ which stabilise $\mathcal{X}$ set-wise.
\end{claim}
\begin{proof}
It is easy to see that $I$ is an approximate subgroup of $(\mathbb{Q}^*,+)$ commensurable with $\mathcal{X}$. Let $\sigma$ be an automorphism of $(\mathbb{Q}^*,+)$ stabilizing $\mathcal{X}$. We claim that for any $\epsilon\in \mathcal{E}$, there is $\eta\in\mathcal{E}$ such that $\sigma([-1-\epsilon,1+\epsilon])=[-1-\eta,1+\eta]$. Suppose not, then there are $m\in\mathbb{N}$ and $\eta',\epsilon'\in\mathcal{E}$ such that $m+\eta'>0$ and $\sigma([-1-\epsilon,1+\epsilon])= X_{m,\epsilon',\eta'}$. Let $r\in[-1-\epsilon,1+\epsilon]$ such that $\sigma(r)=m+\eta'$. Note that $\frac{r}{2}\in[-1-\epsilon,1+\epsilon]$ and $\sigma(\frac{r}{2})\in X_{m,\epsilon',\eta'}$. However, $\sigma(\frac{r}{2})=\frac{\sigma(r)}{2}=\frac{m+\eta'}{2}\not\in X_{m,\epsilon',\eta'}$, a contradiction.
\end{proof}
\end{example}

Before we go to the technical details, we want to explain briefly the idea of the proof of the Main Theorem first. Basically, we will follow the strategy of the group case, see \cite{wagner1998almost} or \cite[Theorem 4.2.4]{Wagner-Supersimple}. Given a uniform family of commensurable approximate subgroups $\mathcal{X}$, we will first build a semilattice by taking finite unions. We will associate each finite union with a commensurable approximate subgroup where we reverse the order of the semilattice. Let $\mathcal{I}$ the family of approximate subgroups associated to finite unions. In the group case, one can find a unique minimal object in $\mathcal{I}$, hence get an invariant object. However, in the case of approximate subgroups, it is possible that the minimal object is the infimum of the whole semilattice $\mathcal{I}$ and it is not clear that we can control the size of the infimum. It can be shown that $\mathcal{I}$ is also a uniform family of approximate subgroups and moreover, unlike $\mathcal{X}$, elements in $\mathcal{I}$ have large finite intersections. We therefore do a dual construction. Starting from $\mathcal{I}$, we build another family of approximate subgroups $\mathcal{Y}$ which is closed under finite unions. It turns out that $\mathcal{Y}$ is uniformly upper-bounded, thus $\bigcup\mathcal{Y}$ is the invariant object that we are looking for.

In the following, we will present some lemmas that are repeatedly used in the proof of the Main Theorem. They are straightforward generalisations of classical results from additive combinatorics (for example Lemma \ref{lem-xx} is from Rusza's covering lemma). 


\begin{lemma}\label{lem-prodcom}
Let $\mathcal{X}$ be a family of uniformly $N$-commensurable $K$-approximate subgroups in an ambient group $G$. Let $T:=\prod_{0\leq i<n}X_i$ with $X_i\in \mathcal{X}$ and $n\geq 1$. Then $T$ is at most $(NK)^{n-1}N$-commensurable with $X$ for any $X\in\mathcal{X}$.
\end{lemma}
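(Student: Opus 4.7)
The plan is to establish the two containments demanded by the definition of commensurability — $T \subseteq Z_0 X$ and $X \subseteq Z_1 T$ with $|Z_0|, |Z_1| \le (NK)^{n-1}N$ — separately, with essentially all of the work going into the forward containment. The reverse containment is almost free: since $1 \in X_i$ for every $i$, we have $X_0 \subseteq X_0 X_1 \cdots X_{n-1} = T$, and since $X$ is directly $N$-commensurable with $X_0$ by the uniform commensurability of $\mathcal{X}$, some $Z_1$ of size at most $N$ satisfies $X \subseteq Z_1 X_0 \subseteq Z_1 T$, and $N \le (NK)^{n-1}N$.

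For the forward containment $T \subseteq Z_0 X$ I will induct on the length of the product, and the main obstacle is choosing the right inductive invariant. The naïve attempt — maintaining $X_0 \cdots X_j \subseteq W_j X$ and extending by one factor at a time — forces one to bound $X \cdot X_{j+1}$ by left translates of $X$; routing this through a covering of $X$ by $X_{j+1}$ and back picks up a spurious factor of $N$ at every step and yields a bound more like $N(N^2K)^{n-1}$. Instead, I plan to carry the partial approximation by the \emph{last} factor of the product:
\[
T_j := X_0 X_1 \cdots X_j \subseteq W_j X_j, \qquad |W_j| \le (NK)^j,
\]
starting from $W_0 = \{1\}$. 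Only at the very end, once the full product is controlled in terms of $X_{n-1}$, will I convert to translates of $X$ using the $N$-cover $X_{n-1} \subseteq Z' X$; this contributes a single final factor of $N$ and produces the bound $(NK)^{n-1}N$ exactly.

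The inductive step itself uses one application of the approximate-subgroup property combined with uniform commensurability. By the latter, there is $Z \subseteq G$ with $|Z| \le N$ and $X_j \subseteq Z X_{j+1}$; and since $X_{j+1}$ is a $K$-approximate subgroup, there is $Y$ with $|Y| \le K$ and $X_{j+1} X_{j+1} \subseteq Y X_{j+1}$. Combining,
\[
T_{j+1} = T_j X_{j+1} \subseteq W_j X_j X_{j+1} \subseteq W_j Z X_{j+1} X_{j+1} \subseteq (W_j Z Y) X_{j+1},
\]
so $W_{j+1} := W_j Z Y$ has $|W_{j+1}| \le (NK)^j \cdot NK = (NK)^{j+1}$. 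Iterating this for $j = 0, 1, \ldots, n-1$ and then applying $X_{n-1} \subseteq Z' X$ with $|Z'| \le N$ closes the argument. Apart from selecting the inductive invariant, nothing in the plan is non-routine: the crucial observation is simply that bookkeeping through $X_j$, rather than through the eventual target $X$, keeps each inductive step at cost $NK$ instead of $N^2 K$.
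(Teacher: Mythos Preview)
Your proof is correct and follows essentially the same approach as the paper: both arguments iterate the step $X_j \subseteq Z X_{j+1}$ followed by $X_{j+1}X_{j+1} \subseteq Y X_{j+1}$ a total of $n-1$ times to obtain $T \subseteq W X_{n-1}$ with $|W| \le (NK)^{n-1}$, then apply one final $N$-cover $X_{n-1} \subseteq Z'X$; the reverse containment via $X_0 \subseteq T$ is handled identically. The only difference is cosmetic---the paper phrases the iteration as peeling off the leftmost factor rather than maintaining an inductive invariant indexed by the rightmost factor---so there is nothing substantive to add.
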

\begin{proof}
Fix $X\in \mathcal{X}$. By assumption, there are $N_0,K_0\subseteq G$ with $|N_0|\leq N$ and $|K_0|\leq K$ such that $X_0\subseteq N_0X_1$ and $X_1X_1\subseteq K_0X_1$. Therefore, $\prod_{0\leq i<n}X_i\subseteq N_0K_0\prod_{1\leq i<n}X_i.$ Similarly, there are $N_1,K_1,\ldots,N_{n-2},K_{n-2}\subseteq G$ such that $\prod_{0\leq i<n}X_i\subseteq (\prod_{0\leq i<n-1}N_iK_i)X_{n-1}.$ By assumption $X_{n-1}\subseteq N_{n-1}X$ for some $|N_{n-1}|\leq N$. Therefore, $T=\prod_{0\leq i<n}X_i\subseteq (\prod_{0\leq i<n-1}N_iK_i)N_{n-1}X.$ We have $|(\prod_{0\leq i<n-1}N_iK_i)N_{n-1}|\leq (NK)^{n-1}N$. 

On the other hand, as $X$ is $N$-commensurable with $X_0\subseteq T$, there is some $Z$ with $|Z|\leq N$ such that $X\subseteq Z X_0\subseteq ZT$. Hence, $T$ is $(NK)^{n-1}N$-commensurable with $X$.
\end{proof}

\begin{lemma}\label{lem-cover}
Let $G$ be a group and $X,Y\subseteq G$. Suppose $Y^{-1}=Y$ and there is a finite set $Z\subseteq G$ such that $X\subseteq ZY$. Let $X_0\subseteq X$ be maximal such that the family $(x_0Y:x_0\in X_0)$ is disjoint, that is $x_0Y\cap x_0'Y=\emptyset$ for all $x_0,x_0'\in X_0$ with $x_0\neq x_0'$. Then $|X_0|\leq |Z|$.
\end{lemma}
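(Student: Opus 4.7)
The plan is a direct pigeonhole argument via a choice function into $Z$, using the symmetry of $Y$ to turn the covering hypothesis $X\subseteq ZY$ into a statement about the translates $x_0Y$.

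First I would, for each $x_0\in X_0$, use the inclusion $X_0\subseteq X\subseteq ZY$ to select (arbitrarily) some $z(x_0)\in Z$ and some $y(x_0)\in Y$ with $x_0 = z(x_0)\,y(x_0)$. This defines a map $z(\cdot):X_0\to Z$. The goal is to show this map is injective, from which the bound $|X_0|\leq |Z|$ follows immediately.

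The key observation is that symmetry $Y=Y^{-1}$ rewrites the equation $x_0=z(x_0)y(x_0)$ as $z(x_0)=x_0\,y(x_0)^{-1}\in x_0Y$. So $z(x_0)$ is a specific element of the translate $x_0Y$. Now suppose $x_0,x_0'\in X_0$ satisfy $z(x_0)=z(x_0')$; call this common element $z$. Then $z\in x_0Y$ and $z\in x_0'Y$ simultaneously, so $x_0Y\cap x_0'Y\neq\emptyset$. By the very definition of $X_0$ (whose translates $\{x Y:x\in X_0\}$ are pairwise disjoint), this forces $x_0=x_0'$, proving injectivity.

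I do not anticipate a serious obstacle here: the statement is essentially pigeonhole, and the only real content is the use of $Y=Y^{-1}$ to guarantee that the chosen coset representative $z(x_0)$ actually lies in the translate $x_0Y$. Note that maximality of $X_0$ is not needed for this bound — only the disjointness property is used; maximality will be relevant in the subsequent applications of the lemma (where one typically combines $|X_0|\leq|Z|$ with the fact that $X\subseteq X_0Y Y^{-1}=X_0YY$ by maximality, but that is outside the scope of this lemma's proof).
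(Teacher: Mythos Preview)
Your proof is correct and essentially identical to the paper's: both use $Y=Y^{-1}$ to rewrite $x_0\in zY$ as $z\in x_0Y$, and then pigeonhole on the map $X_0\to Z$. The paper phrases it contrapositively (assume $|X_0|>|Z|$ and obtain two elements sharing a $z$), while you prove injectivity directly, but this is the same argument. Your remark that maximality is not used for the bound itself is accurate.
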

\begin{proof}
Suppose, towards a contradiction, that $|X_0|> |Z|$. Then there are $x_i,x_j\in X_0$ with $x_i\neq x_j$ and $z\in Z$ such that $x_i\in zY$ and $x_j\in zY$. Now we can see that $z\in x_iY^{-1}=x_i Y$ and $z\in x_jY^{-1}=x_jY$, contradicting that $x_iY\cap x_jY=\emptyset$. 
\end{proof}

\begin{lemma}\label{lem-xx}
Let $G$ be a group and $X,Y$ be $N$-commensurable $K$-approximate subgroups. Then there is some $E\subseteq G$ such that $|E|\leq KN$ and $XX\subseteq E(XX\cap YY)$.
\end{lemma}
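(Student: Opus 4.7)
The plan is to combine the approximate-subgroup doubling bound for $X$ with a disjointness argument from Lemma~\ref{lem-cover} applied to cosets of $Y$. Concretely, since $X$ is a $K$-approximate subgroup, there is some $F \subseteq G$ with $|F| \leq K$ and $XX \subseteq FX$; and by $N$-commensurability there is some $Z \subseteq G$ with $|Z| \leq N$ and $X \subseteq ZY$. Thus it suffices to cover $X$ itself by at most $N$ translates of $XX \cap YY$, and then multiply on the left by $F$ to pass from $X$ to $XX$.

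To cover $X$, I will apply Lemma~\ref{lem-cover} with the pair $(X, Y)$: let $X_0 \subseteq X$ be maximal such that the left cosets $\{x_0 Y : x_0 \in X_0\}$ are pairwise disjoint. Since $Y^{-1} = Y$ and $X \subseteq ZY$, Lemma~\ref{lem-cover} gives $|X_0| \leq |Z| \leq N$. By maximality of $X_0$, for every $x \in X$ there is some $x_0 \in X_0$ with $xY \cap x_0Y \neq \emptyset$, hence $x_0^{-1}x \in YY^{-1} = YY$. Crucially, because both $x$ and $x_0$ lie in $X$ and $X = X^{-1}$, we also have $x_0^{-1}x \in X^{-1}X = XX$. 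Therefore $x_0^{-1}x \in XX \cap YY$, which shows $X \subseteq X_0(XX \cap YY)$.

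Putting this together, $XX \subseteq FX \subseteq FX_0(XX \cap YY)$, so setting $E := FX_0$ gives $|E| \leq KN$ and $XX \subseteq E(XX \cap YY)$, as required. The one step that needs care (and which motivates the choice to cover $X$ rather than $XX$) is the argument that $x_0^{-1}x \in XX$: if one instead chose $X_0 \subseteq XX$ maximal, one would only get $x_0^{-1}x \in (XX)(XX) = X^4$, which is too coarse. Choosing $X_0$ inside $X$ keeps the "fine" increment inside $XX$, at the cost of an extra factor of $K$ absorbed from $XX \subseteq FX$.
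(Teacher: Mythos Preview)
Your proof is correct and follows essentially the same approach as the paper: both choose $X_0\subseteq X$ maximal with $\{x_0Y:x_0\in X_0\}$ disjoint, use Lemma~\ref{lem-cover} to bound $|X_0|\leq N$, show $X\subseteq X_0(XX\cap YY)$ via the observation that $x_0^{-1}x\in XX\cap YY$, and then absorb the passage from $X$ to $XX$ using a set of size~$K$. The only cosmetic difference is that the paper phrases the inclusion $X\subseteq X_0(XX\cap YY)$ as a chain of set equalities indexed over $X_0$, whereas you argue elementwise; the content is identical.
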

\begin{proof}
By definition, there is $Z_0\subseteq G$ with $|Z_0|\leq N$ such that $X\subseteq Z_0Y$. 
Let $X_0\subseteq X$ be maximal such that $(x_0Y:x_0\in X_0)$ is a disjoint family. Then by Lemma \ref{lem-cover} we have $|X_0|\leq |Z_0|\leq N$.

As $(x_0Y:x_0\in X_0)$ is maximal disjoint, $xY\cap X_0 Y\neq\emptyset$ for any $x\in X$, whence $x\in X_0YY^{-1}=X_0YY$. Therefore, $X\subseteq X_0YY$. Note that $$\begin{aligned}X=X_0YY\cap X=\bigcup_{x\in X_0}(xYY\cap X)=\bigcup_{x\in X_0}(xYY\cap xx^{-1}X)\\\subseteq \bigcup_{x\in X_0}(xYY\cap xXX)=\bigcup_{x\in X_0}x(YY\cap XX)=X_0(XX\cap YY).\end{aligned}$$

By assumption, there is some $X_1\subseteq G$ with $|X_1|\leq K$ and $XX\subseteq X_1X$. Therefore, $XX\subseteq X_1X\subseteq X_1X_0(XX\cap YY)$. Let $E:=X_1X_0$. Then $|E|\leq KN$ and $XX\subseteq E(XX\cap YY)$.
\end{proof}
\begin{rem}If $X,Y$ are commensurable approximate subgroups, it is possible that their intersection is empty, as shown by the example in Section \ref{sec2}.
\end{rem}

\section{Proof of the Main Theorem}
We now proceed to prove the Main Theorem. 
Let $G$ and $\mathcal{X}$ be given as in the Main Theorem. 
We may assume that $\mathcal{X}$ is a family of uniformly $N$-commensurable $K$-approximate subgroups.
We define two new families. Let $\mathcal{X}^2:=\{XX:X\in\mathcal{X}\}$ and $\mathcal{Z}:=\{\bigcup_{i\in I}X_i:X_i\in\mathcal{X}^2, I\text{ finite}.\}$

\begin{rem}
It is easy to see that $\mathcal{X}^2$ is a family of uniformly $NK$-commensurable $K^3$-approximate subgroups. Moreover, $\mathcal{X}^2$ is commensurable with $\mathcal{X}$.
\end{rem}

Notation: for $X\subseteq G$, we write $X^k$ for the $k$-fold product of $X$. 

In the following, we will generalise the notion of \textsl{index of a subgroup} to arbitrary subsets in an ambient group. Let $H$ be a subgroup of $G$. Then the index of $H$ in $G$, $[G:H]$, is the number of disjoint cosets of $H$ that covers $G$. Let $X,Y$ be two subsets of a group $G$. Following the definition of subgroup index, there might be two ways to define the relative size of $Y$ in $X$. One is that the minimal number of $X$-translates of $Y$ (that is $\{xY:x\in X\}$) which covers $X$. And the other is the maximal number of disjoint $X$-translates of $Y$. It turned out that the latter is easier to handle because of disjointness. Moreover, there is a connection between these two definitions by Rusza's covering lemma, that is, if $(xY:x\in X_0)$ is a maximal disjoint family of $X$-translates of $Y$ then $X$ is covered by $X_0YY^{-1}$. This also partially explains that in the following proof, instead of working with elements in $\mathcal{X}^2$, we need to go to some higher-fold products. As has been explained in the main idea of the proof, we want to reverse the order of semilattice $(\mathcal{Z},\subseteq)$, and this is not possible without going to the higher-fold products (see Lemma \ref{lem-key}). 
\begin{definition}\label{def-index}
Let $X,Y\subseteq G$ with $1_G\in X$. Define $$[X:Y]:=\sup\{|X_0|: 1_G\in X_0\subseteq X  \text{ and } (xY:x\in X_0) \text{ is a disjoint family}\},$$ where we denote $\infty$ as the supreme of an unbounded set in $\mathbb{N}$.
\end{definition}

Fix $k$ and $Z=\bigcup_{i\in I}X_i\in\mathcal{Z}$. Let $X\in \mathcal{X}^2$. By Lemma \ref{lem-xx} we have $X\subseteq E(X\cap X_i)\subseteq E(X\cap Z)$ for some $i\in I$ and $|E|\leq KN$. Note that $E(X\cap Z)\subseteq E(X\cap Z)^{2^k}$ for any $k\in\mathbb{N}$. By Lemma \ref{lem-cover}, $[X:(X\cap Z)^{2^k}]\leq KN$, and $\max\{[X:(X\cap Z)^{2^k}]:X\in\mathcal{X}^2\}$ exists for any $k\in\mathbb{N}$. Note that $\max\{[X:(X\cap Z)^{2^k}]:X\in\mathcal{X}^2\}$ is non-increasing when $k$ increases. Hence, $\min_{k\in\mathbb{N}}\max\{[X:(X\cap Z)^{2^k}]:X\in\mathcal{X}^2\}$ exists and there is a minimal $k_Z$ such that $\max\{[X:(X\cap Z)^{2^{k_Z}}]:X\in\mathcal{X}^2\}$ reaches this value for $Z\in\mathcal{Z}$. Let $$m:=\min_{Z\in\mathcal{Z}}\min_{k\in\mathbb{N}}\max\{[X:(X\cap Z)^{2^k}]:X\in\mathcal{X}^2\}.$$ Let $\mathcal{Z}_m:=\{Z\in\mathcal{Z}:\min_{k\in\mathbb{N}}\max\{[X:(X\cap Z)^{2^k}]:X\in\mathcal{X}^2\}=m\}.$ Then $\mathcal{Z}_m$ is non-empty. Moreover, for any $Z\subseteq Z'\in\mathcal{Z}$ if $Z\in \mathcal{Z}_m$, then $$
\max\{[X:(X\cap Z')^{2^{k_Z}}]:X\in\mathcal{X}^2\}\leq \max\{[X:(X\cap Z)^{2^{k_Z}}]:X\in\mathcal{X}^2\}=m. $$
Hence, $\min_{k\in\mathbb{N}}\max\{[X:(X\cap Z')^{2^k}]:X\in\mathcal{X}^2\}\leq m$, and they are equal by minimality of $m$. Thus, $Z'\in \mathcal{Z}_m$. We can also see that $k_{Z'}\leq k_Z$. 

Let $k_0:=\min\{k_Z:Z\in\mathcal{Z}_m\}$.
We call $Z\in\mathcal{Z}_m$ \textsl{strong} if $k_Z=k_0$. It is easy to see that for $Z$ and $Z'\in\mathcal{Z}$, if $Z'\supseteq Z$ and $Z\in\mathcal{Z}_m$ is strong, then so is $Z'$. 
For strong $Z$, define $\eta(Z):=\{X\in\mathcal{X}^2:[X:(X\cap Z)^{2^{k_0+1}}]=m\}$ and $N(Z):=\bigcup_{X\in\eta(Z)}X\cap(X\cap Z)^{2^{k_0+1}}.$

\begin{lemma}\label{lem-key}
If $Z\subseteq Z'$ are both strong, then $N(Z)\supseteq N(Z')$.
\end{lemma}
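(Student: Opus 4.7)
The plan is first to establish $\eta(Z') \subseteq \eta(Z)$, and then to show that for each $X \in \eta(Z')$ the slice $X \cap (X \cap Z')^{2^{k_0+1}}$ contributing to $N(Z')$ lies inside $N(Z)$.

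For the first point, $Z \subseteq Z'$ gives $(X \cap Z)^{2^{k_0+1}} \subseteq (X \cap Z')^{2^{k_0+1}}$ for every $X \in \mathcal{X}^2$; since a family of disjoint cosets of the larger set is automatically a family of disjoint cosets of the smaller one, we have $[X:(X \cap Z)^{2^{k_0+1}}] \geq [X:(X \cap Z')^{2^{k_0+1}}]$. Strongness of $Z$ (so $k_Z = k_0$) together with the monotonicity of the max in $k$ and the minimality of $m$ shows $\max_{X \in \mathcal{X}^2}[X:(X \cap Z)^{2^{k_0+1}}] = m$, and likewise for $Z'$. So for $X \in \eta(Z')$, both indices are pinched to $m$, giving $X \in \eta(Z)$.

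For the second point, given $X \in \eta(Z') \subseteq \eta(Z)$ and $x \in X \cap (X \cap Z')^{2^{k_0+1}}$, the natural target is $x \in X \cap (X \cap Z)^{2^{k_0+1}}$, which together with $X \in \eta(Z)$ would place $x$ in $N(Z)$. The approach is a Schlichting-style maximal disjoint family argument: choose $X_0 \subseteq X$ maximal disjoint for $(X \cap Z)^{2^{k_0+1}}$ of size $m$. By the equality of indices and the observation that $(X \cap Z')^{2^{k_0+1}}$-disjointness implies $(X \cap Z)^{2^{k_0+1}}$-disjointness, one can arrange that $X_0$ is also maximal by inclusion for the larger set $(X \cap Z')^{2^{k_0+1}}$. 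Then use the covers $X \subseteq X_0 \cdot (X \cap Z)^{2^{k_0+2}}$ and $X \subseteq X_0 \cdot (X \cap Z')^{2^{k_0+2}}$, together with the hypothesis $x \in (X \cap Z')^{2^{k_0+1}}$, to locate $x$ inside $(X \cap Z)^{2^{k_0+1}}$.

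The main obstacle is that for nested symmetric sets $Y \subseteq Y'$ with equal $[X:\cdot]$-index, one does \emph{not} in general have $X \cap Y' = X \cap Y$ as for subgroups; small examples in $(\mathbb{Z},+)$ with, say, $Y = \{-1,0,1\}$ and $Y' \supsetneq Y$ show that the subgroup Lagrange-type identity collapses. Consequently the argument must genuinely exploit the product structure $(X \cap Z)^{2^{k_0+1}} = ((X \cap Z)^{2^{k_0}})^{2}$ and the stability at the pre-doubled level $k_0$ afforded by strongness. I expect the decisive step to be the following: if $x \in X$ lay in $(X \cap Z')^{2^{k_0+1}} \setminus (X \cap Z)^{2^{k_0+1}}$, then $x$ could be used to enlarge a maximal disjoint family at level $2^{k_0}$ in $X$, contradicting either the minimality of $m$ or the minimality of $k_0$ among strong elements of $\mathcal{Z}_m$. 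Should the direct slice inclusion prove too delicate, the fallback is to find, for each such $x$, an alternate $X^* \in \eta(Z)$ with $x \in X^* \cap (X^* \cap Z)^{2^{k_0+1}}$, exploiting the commensurability of members of $\mathcal{X}^2$.
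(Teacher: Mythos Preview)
Your first step, $\eta(Z')\subseteq\eta(Z)$, is fine and matches the paper. The target you identify for the second step, namely the slice equality $X\cap(X\cap Z')^{2^{k_0+1}}=X\cap(X\cap Z)^{2^{k_0+1}}$ for $X\in\eta(Z')$, is also exactly right. The execution, however, has a genuine gap.

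First, your disjoint-family argument runs in the wrong direction. You choose $X_0$ maximal disjoint for the \emph{smaller} set $(X\cap Z)^{2^{k_0+1}}$ and then try to promote it to the larger one; but disjointness for a smaller symmetric set does not imply disjointness for a larger one, so there is no way to ``arrange'' $X_0$ to be disjoint for $(X\cap Z')^{2^{k_0+1}}$. One must start with $x_1,\ldots,x_m\in X$ witnessing $[X:(X\cap Z')^{2^{k_0+1}}]=m$; these are automatically disjoint for any smaller set. Second, and more importantly, even with that fix, working entirely at level $2^{k_0+1}$ does not close: maximality of $\{x_i(X\cap Z)^{2^{k_0+1}}\}$ would only give the cover $X\subseteq\bigcup_i x_i(X\cap Z)^{2^{k_0+2}}$, one level too high to compare with $(X\cap Z')^{2^{k_0+1}}$. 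The decisive move in the paper, which you anticipate verbally (``stability at the pre-doubled level $k_0$'') but do not carry out, is to drop to level $2^{k_0}$: since $Z$ is strong, $\max_{X'}[X':(X'\cap Z)^{2^{k_0}}]=m$, so the same $x_1,\ldots,x_m$ form a \emph{maximal} disjoint family already for $(X\cap Z)^{2^{k_0}}$, whence $X\subseteq\bigcup_i x_i(X\cap Z)^{2^{k_0+1}}$. Now the cover is at the right level, the pieces sit inside the pairwise disjoint $x_i(X\cap Z')^{2^{k_0+1}}$, and the partition comparison forces the slice equality directly. Your contradiction sketch and the fallback via an alternate $X^*$ are unnecessary once this drop to level $2^{k_0}$ is made.
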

\begin{proof}
If $Z\subseteq Z'$ are both strong then $\eta(Z')\subseteq \eta(Z)$. Let $X\in \eta(Z')$ and $x_1=1_G,x_2,\ldots, x_m\in X$ be such that the family $(x_i(X\cap Z')^{2^{k_0+1}}:i\leq m)$ is disjoint. Note that $(x_i(X\cap Z)^{2^{k_0}}:i\leq m)$ is also disjoint. As $\max\{[X':(X'\cap Z)^{2^{k_0}}]:X'\in\mathcal{X}^2\}=m$ by definition of $k_0$, $(x_i(X\cap Z)^{2^{k_0}}:i\leq m)$ is a maximal disjoint family in $\{x(X\cap Z)^{2^{k_0}}:x\in X\}$. Therefore, $$X\subseteq \bigcup_{1\leq i\leq m}x_i(X\cap Z)^{2^{k_0+1}}\subseteq \bigcup_{1\leq i\leq m}x_i(X\cap Z')^{2^{k_0+1}}.$$ As $x_i(X\cap Z)^{2^{k_0+1}}\subseteq x_i(X\cap Z')^{2^{k_0+1}}$ for each $1\leq i\leq m$ and $(x_i(X\cap Z')^{2^{k_0+1}}:i\leq m)$ is a disjoint family, $$X\cap x_i(X\cap Z')^{2^{k_0+1}}=X\cap x_i(X\cap Z)^{2^{k_0+1}},$$ for each $i\leq m$. In particular, we have $X\cap (X\cap Z')^{2^{k_0+1}}=X\cap (X\cap Z)^{2^{k_0+1}}.$
Therefore, $N(Z)\supseteq N(Z')$.
\end{proof}

\begin{lemma}\label{lem-N(Z)}
Let $Z\in\mathcal{Z}$ be strong. Then $N(Z)$ covers any $X'\in\mathcal{X}^2$ with at most $(KN)^2$-translates.
\end{lemma}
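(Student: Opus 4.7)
The plan is to reduce to showing that a large piece of $X'$ already lies inside $N(Z)$, and then use commensurability of $X'$ with that piece to cover all of $X'$ by a small number of translates. Since $X'$ itself need not belong to $\eta(Z)$, the main obstacle is finding a good $X_* \in \eta(Z)$ whose intersection with $X'$ is forced into $N(Z)$. The trick is to enlarge $Z$ to $Z' := Z \cup X' \in \mathcal{Z}$, which automatically places $X'$ inside $Z'$ while keeping things strong.

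By the observation following inequality (1), $Z' \in \mathcal{Z}_m$ and $k_{Z'} \leq k_0$; by minimality of $k_0$ we have $k_{Z'} = k_0$, so $Z'$ is strong and $\eta(Z')$ is non-empty. A direct squeeze argument using $Z \subseteq Z'$ and the strongness of both shows $\eta(Z') \subseteq \eta(Z)$: indeed, for $X \in \eta(Z')$, $(X\cap Z)^{2^{k_0+1}} \subseteq (X\cap Z')^{2^{k_0+1}}$ forces $[X:(X\cap Z)^{2^{k_0+1}}] \geq m$, and strongness of $Z$ forces it to be $\leq m$. Pick any $X_* \in \eta(Z')$; then $X_* \in \eta(Z)$, and so $X_* \cap (X_* \cap Z)^{2^{k_0+1}} \subseteq N(Z)$.

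Now I claim $X' \cap X_* \subseteq N(Z)$. Indeed, any $x \in X' \cap X_*$ satisfies $x \in X_*$ and $x \in X' \subseteq Z'$, so $x \in X_* \cap Z' \subseteq (X_* \cap Z')^{2^{k_0+1}}$ (using $1 \in X_* \cap Z'$). Thus $x \in X_* \cap (X_* \cap Z')^{2^{k_0+1}}$, and by the equality $X_* \cap (X_* \cap Z')^{2^{k_0+1}} = X_* \cap (X_* \cap Z)^{2^{k_0+1}}$ established inside the proof of Lemma \ref{lem-key}, we conclude $x \in N(Z)$.

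Finally, write $X' = WW$ and $X_* = VV$ with $W, V \in \mathcal{X}$. Applying Lemma \ref{lem-xx} to the $N$-commensurable $K$-approximate subgroups $W$ and $V$ produces $E \subseteq G$ with $|E| \leq KN$ and $X' = WW \subseteq E(WW \cap VV) = E(X' \cap X_*) \subseteq E \cdot N(Z)$. Since $KN \leq (KN)^2$, this yields the claim; the bound $(KN)^2$ in the statement leaves room should one prefer to apply Lemma \ref{lem-xx} at the $\mathcal{X}^2$-level instead, which still suffices.
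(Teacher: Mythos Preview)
Your proof is correct, but the paper's argument is considerably shorter and avoids both the enlargement $Z' = Z \cup X'$ and the appeal to Lemma~\ref{lem-key}. The paper simply picks any $X \in \eta(Z)$ and any $Y \in \mathcal{X}^2$ with $Y \subseteq Z$ (such $Y$ exists since $Z$ is a finite union of members of $\mathcal{X}^2$), observes the trivial containment $X \cap Y \subseteq X \cap (X \cap Z)^{2^{k_0+1}} \subseteq N(Z)$, and then covers in two steps: $X \cap Y$ covers $X$ by $KN$ translates (Lemma~\ref{lem-xx} at the $\mathcal{X}$-level), and $X$ covers the given $X'$ by $KN$ translates (uniform $KN$-commensurability of $\mathcal{X}^2$), yielding $(KN)^2$.

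What you do differently is make the intersection involve $X'$ itself rather than a fixed $X$: you force $X' \cap X_* \subseteq N(Z)$ by absorbing $X'$ into the index set via $Z' = Z \cup X'$ and invoking the equality established in Lemma~\ref{lem-key}. This buys you a one-step covering and hence the sharper bound $KN$ (as you note), at the cost of re-checking strongness and re-using the combinatorial core of Lemma~\ref{lem-key}. The paper's route is more elementary and self-contained for this lemma; your route extracts slightly more, and also illustrates that the machinery of Lemma~\ref{lem-key} is strong enough to handle arbitrary $X' \in \mathcal{X}^2$, not just those in $\eta(Z)$.
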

\begin{proof}
Suppose $Z=\bigcup_{i\leq n_Z}X_i$ where $X_i\in\mathcal{X}^2$. Note that $X\cap(X\cap Z)^{2^{k_0+1}}\supseteq X\cap X_0$ covers $X$ by $KN$-translates for any $X\in\eta(Z)$ . As $\mathcal{X}^2$ is $KN$-uniformly commensurable, $N(Z)$ covers any $X'\in\mathcal{X}^2$ with at most $(KN)^2$-translates. 
\end{proof}

\begin{lemma}\label{lem-intersec}
Let $Z_0,\ldots,Z_n$ be strong. Then $\bigcap_{i\leq n}N(Z_i)\supseteq N(\bigcup_{i\leq n}Z_i)$.
\end{lemma}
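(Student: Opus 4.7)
The plan is to reduce the statement to Lemma \ref{lem-key} by showing that the union $Z' := \bigcup_{i\leq n} Z_i$ is itself strong; once that is established, applying Lemma \ref{lem-key} to each pair $Z_i \subseteq Z'$ and intersecting over $i$ finishes the argument.

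First I would note that $\mathcal{Z}$ is closed under finite unions by its very definition (a union of finite unions of members of $\mathcal{X}^2$ is again such a union), so $Z'\in\mathcal{Z}$. Next I would invoke the observation in the paragraph preceding Lemma \ref{lem-key}: since $Z_0\subseteq Z'$ and $Z_0\in\mathcal{Z}_m$ (every strong set lies in $\mathcal{Z}_m$ by definition), inequality (1) gives both $Z'\in\mathcal{Z}_m$ and $k_{Z'}\leq k_{Z_0}=k_0$. By the very definition of $k_0$ as the minimum of $k_Z$ over $Z\in\mathcal{Z}_m$, we must have $k_{Z'}=k_0$, so $Z'$ is strong.

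Now, having $Z_i\subseteq Z'$ with both $Z_i$ and $Z'$ strong, Lemma \ref{lem-key} yields $N(Z_i)\supseteq N(Z')$ for every $i\leq n$. Taking the intersection over $i$ gives
\[
\bigcap_{i\leq n} N(Z_i)\;\supseteq\; N(Z')\;=\;N\Bigl(\bigcup_{i\leq n} Z_i\Bigr),
\]
which is exactly the statement of the lemma.

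The only genuinely delicate point is the verification that $Z'$ is strong, and this is purely bookkeeping: one needs to check that the hypothesis of inequality (1) requires only $Z_0\in\mathcal{Z}_m$ (not strength), so there is no circularity, and that the minimality of $k_0$ is defined on all of $\mathcal{Z}_m$ rather than only on strong elements. Both points are immediate from the definitions, so I do not expect any real obstacle; the lemma is essentially an immediate consequence of Lemma \ref{lem-key} together with closure of $\mathcal{Z}_m$ and of strength under enlarging.
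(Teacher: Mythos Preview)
Your proposal is correct and essentially follows the paper's approach. The paper's proof does not cite Lemma \ref{lem-key} explicitly but instead re-derives its content (that $\eta(Z')\subseteq\eta(Z_i)$ and $X\cap(X\cap Z')^{2^{k_0+1}}=X\cap(X\cap Z_i)^{2^{k_0+1}}$ for $X\in\eta(Z')$), so your version is in fact slightly cleaner by invoking the lemma as a black box; both arguments also tacitly rely on the closure observation you spelled out, namely that $\bigcup_{i\le n}Z_i$ is strong.
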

\begin{proof}
By Lemma \ref{lem-key}, $N(Z_i)\supseteq N(\bigcup_{i\leq n}Z_i)$ for each $i\leq n$.
\end{proof}

For any $Z=\bigcup_{i\in I}Z_i \in\mathcal{Z}$, define $n(Z)=|I|$ (we regard $\mathcal{Z}$ as a formal family of finite unions of members in $\mathcal{X}^2$). Let $n_0:=\min\{n(Z):Z\text{ strong.}\}$

\begin{lemma}\label{lem-NZ}
Let $Z$ be strong and $n(Z)=n_0$. Then there is $M\in\mathbb{N}$ depending on $n_0,~k_0,~K$ and $N$ such that $Z^{2^{k_0+1}}$ is $M$-commensurable with any $X\in\mathcal{X}^2$, and $Z^{2^{k_0+2}}$ is $M^2$-commensurable with any $X\in\mathcal{X}^2$.
\end{lemma}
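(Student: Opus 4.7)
The plan is to apply Lemma~\ref{lem-prodcom} directly to the family $\mathcal{X}^2$, which by the Remark is a uniformly $NK$-commensurable family of $K^3$-approximate subgroups. Write $Z_0=\bigcup_{i<n_0}Y_i$ with each $Y_i\in\mathcal{X}^2$; since every $Y_i$ contains $1$ and is symmetric, so is $Z_0$, and hence $(Z_0)^k\supseteq Z_0$ for every $k\geq 1$.

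Covering an arbitrary $X\in\mathcal{X}^2$ by a few translates of $(Z_0)^{2^{k_0+1}}$ (or of $(Z_0)^{2^{k_0+2}}$) is immediate from commensurability: fixing any $Y_{i_0}\subseteq Z_0$ gives $X\subseteq SY_{i_0}\subseteq S(Z_0)^{2^{k_0+1}}$ for some $S$ with $|S|\leq NK$. For the opposite direction, I would decompose $(Z_0)^{2^{k_0+1}}$ as a union of $n_0^{2^{k_0+1}}$ products of length $2^{k_0+1}$ of members of $\mathcal{X}^2$, one product per index tuple in $\{0,\dots,n_0-1\}^{2^{k_0+1}}$. Lemma~\ref{lem-prodcom} applied to $\mathcal{X}^2$ says each such product lies in at most $(NK\cdot K^3)^{2^{k_0+1}-1}(NK)=(NK^4)^{2^{k_0+1}-1}(NK)$ left-translates of $X$, so the whole union lies in $M_1:=n_0^{2^{k_0+1}}(NK^4)^{2^{k_0+1}-1}(NK)$ translates. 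The parallel computation with exponent $2^{k_0+2}$ in place of $2^{k_0+1}$ produces a bound $M_2:=n_0^{2^{k_0+2}}(NK^4)^{2^{k_0+2}-1}(NK)$.

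Setting $N_Z:=\max\{M_1,\,\lceil\sqrt{M_2}\,\rceil,\,NK\}$, which depends only on $n_0$, $k_0$, $K$ and $N$, gives both parts of the lemma at once: $(Z_0)^{2^{k_0+1}}$ is $N_Z$-commensurable, and $(Z_0)^{2^{k_0+2}}$ is $(N_Z)^2$-commensurable, with every $X\in\mathcal{X}^2$. The whole argument is really bookkeeping through the constants of Lemma~\ref{lem-prodcom}; the only point not completely automatic is reconciling $M_2$ with the ``$N_Z$ versus $(N_Z)^2$'' shape of the claim, which is handled by the choice of $N_Z$ above, and there is no conceptual obstacle.
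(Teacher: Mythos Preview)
Your argument is correct and follows essentially the same route as the paper: expand $(Z_0)^{2^{k_0+1}}$ as a union of $n_0^{2^{k_0+1}}$ products of members of $\mathcal{X}^2$, apply Lemma~\ref{lem-prodcom} to each product using the constants from the Remark ($\mathcal{X}^2$ is a uniformly $NK$-commensurable family of $K^3$-approximate subgroups), and use $Y_{i_0}\subseteq Z_0$ for the reverse covering. The only cosmetic difference is that the paper writes down an explicit $N_Z$ and then says ``similarly'' for the $(N_Z)^2$ bound, whereas you fold $\lceil\sqrt{M_2}\rceil$ into the definition of $N_Z$ to force the squared bound; either device works and the content is identical.
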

\begin{proof}
Suppose $Z=\bigcup_{i\in I}X_i$ with $X_i\in \mathcal{X}^2$. Then $$Z^{2^{k_0+1}}= \bigcup_{f:~2^{k_0+1}\to I}~~\prod_{i<2^{k_0+1}}X_{f(i)}.$$ $X$ is at most $(K^4N)^{2^{k_0+1}-1}KN$-commensurable with each $\prod_{i<2^{k_0+1}}X_{f(i)}$ by Lemma \ref{lem-prodcom} and the remark before Definition \ref{def-index}. Therefore, $X$ covers $Z^{2^{k_0+1}}$ with at most $M:=n_0^{2^{k_0+1}} K^{2^{k_0+3}+1} N^{2^{k_0+1}}$ translates. As any $X_i\subseteq Z$ covers $X$ with at most $KN$-translates, so does $Z^{2^{k_0+1}}$.
Similarly, $Z^{2^{k_0+2}}$ is at most $M^2$-commensurable with any $X\in\mathcal{X}^2$.
\end{proof}
We define $$\mathcal{I}:=\{N(Z'): Z'\text{ strong and there is }Z\subseteq Z' \text{ with }Z \text{ strong and }n(Z)=n_0\},$$ and define a subclass $\mathcal{I}':=\{N(Z): Z \text{ strong and }n(Z)=n_0\}.$

\begin{lemma}\label{lem-I}
$\mathcal{I}$ is a uniform family of commensurable approximate subgroups and is commensurable with $\mathcal{X}$.
\end{lemma}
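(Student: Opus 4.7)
The plan is to establish, with constants depending only on $K$, $N$, $k_0$, $n_0$, that each $N(Z) \in \mathcal{I}$ is an approximate subgroup and that any two elements of $\mathcal{I}$ are uniformly commensurable. The whole argument rests on sandwiching $N(Z)$ between a fixed power of some ``minimal'' strong union $Z'$ (bounded above, via Lemma \ref{lem-NZ}) and an arbitrary $X \in \mathcal{X}^2$ (bounded below, via Lemma \ref{lem-N(Z)}).

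First I would reduce from $\mathcal{I}$ to $\mathcal{I}'$. If $N(Z) \in \mathcal{I}$ and $Z' \subseteq Z$ is the strong witness with $n(Z') = n_0$ required by the definition of $\mathcal{I}$, then Lemma \ref{lem-key} gives $N(Z') \supseteq N(Z)$, and since by construction $N(Z') \subseteq (Z')^{2^{k_0+1}}$, I obtain the upper bound
$$N(Z) \subseteq (Z')^{2^{k_0+1}}$$
for some strong $Z' \subseteq Z$ with $n(Z') = n_0$. Lemma \ref{lem-NZ} applied to this $Z'$ then provides $N_Z$-commensurability of $(Z')^{2^{k_0+1}}$ (and $(N_Z)^2$-commensurability of $(Z')^{2^{k_0+2}}$) with every $X \in \mathcal{X}^2$, while Lemma \ref{lem-N(Z)} tells me the converse that $N(Z)$ covers every $X \in \mathcal{X}^2$ by at most $(KN)^2$ translates.

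With this sandwich set up, the approximate subgroup axioms for $N(Z)$ are routine. The identity lies in $N(Z)$ because $1 \in X \cap Z$ for every $X \in \eta(Z)$, and symmetry of $N(Z)$ passes through the union, intersection and fixed-order product defining it. For the product condition I would note
$$N(Z) \cdot N(Z) \subseteq (Z')^{2^{k_0+2}},$$
which by Lemma \ref{lem-NZ} sits inside $(N_Z)^2$ translates of some fixed $X \in \mathcal{X}^2$, which by Lemma \ref{lem-N(Z)} in turn sits inside $(KN)^2$ translates of $N(Z)$. The resulting uniform bound is $(N_Z K N)^2$.

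Uniform commensurability of any $N(Z_1), N(Z_2) \in \mathcal{I}$ is then the same sandwich chained twice: fixing any $X \in \mathcal{X}^2$, one has $N(Z_1) \subseteq (Z_1')^{2^{k_0+1}} \subseteq F \cdot X \subseteq F \cdot G \cdot N(Z_2)$ with $|F| \leq N_Z$ and $|G| \leq (KN)^2$, and the reverse direction by symmetry, giving a uniform commensurability constant $N_Z (KN)^2$. The only point worth being careful about is verifying that $k_0$, $n_0$, and therefore $N_Z$, are honestly global invariants of $(\mathcal{X}, K, N)$ rather than depending on the particular $Z$; this is precisely what the successive minimizations defining $m$, $k_0$, $n_0$ were set up to guarantee, so no serious obstacle remains.
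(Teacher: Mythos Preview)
Your proposal is correct and follows essentially the same approach as the paper: both sandwich $N(Z)$ between $(Z')^{2^{k_0+1}}$ above (via Lemma~\ref{lem-key} and Lemma~\ref{lem-NZ}) and an arbitrary $X\in\mathcal{X}^2$ below (via Lemma~\ref{lem-N(Z)}), arriving at the identical constants $(N_ZKN)^2$ for the approximate-subgroup bound and $N_Z(KN)^2$ for uniform commensurability. The only cosmetic differences are notation ($Z'$ versus the paper's $Z_0$) and your explicit remark that $\eta(Z)\neq\emptyset$ to ensure $1\in N(Z)$.
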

\begin{proof}
Note that any $N(Z')\in\mathcal{I}$ is symmetric and contains the identity. Moreover, as $Z'\supseteq Z$ for some strong $Z$ with $n(Z)=n_0$, we conclude that $N(Z')\subseteq N(Z)\subseteq Z^{2^{k_0+1}}$ is $M$-commensurable with any $X\in\mathcal{X}^2$ by Lemma \ref{lem-NZ}. Since $Z^{2^{k_0+2}}$ is $M^2$-commensurable with any $X\in\mathcal{X}^2$ and $N(Z')$ covers $X$ with at most $(KN)^2$-translates by Lemma \ref{lem-NZ} and Lemma \ref{lem-N(Z)}, $$N(Z')^2\subseteq N(Z)^2\subseteq Z^{2^{k_0+2}}\subseteq T_0X\subseteq T_0T_1N(Z'),$$ where $T_0,T_1\subseteq G$ with $|T_0|\leq M^2$ and $|T_1|\leq (KN)^2$. Therefore, $N(Z')$ is an $(MKN)^2$-approximate subgroups.

Suppose $N(Z'')\in\mathcal{I}$. Then since $Z^{2^{k_0+1}}$ is $M$-commensurable with any $X\in\mathcal{X}^2$ and $N(Z'')$ covers $X$ by $(KN)^2$-translates, $$N(Z')\subseteq N(Z)\subseteq Z^{2^{k_0+1}}\subseteq T_0' X\subseteq T_0'T_1'N(Z'')$$ for some $|T_0'|\leq M$ and $|T_1'|\leq (KN)^2$.

We conclude that $\mathcal{I}$ is a family of uniformly $M(KN)^2$-commensurable $(MKN)^2$-approximate subgroups.  

By the above argument, we know that $N(Z)$ is $M$-commensurable with any $X\in\mathcal{X}^2$. Hence $\mathcal{I}$ is commensurable with $\mathcal{X}^2$. As $\mathcal{X}^2$ is commensurable with $\mathcal{X}$, $\mathcal{I}$ is commensurable with $\mathcal{X}$.
\end{proof}
Note that $\mathcal{I}$ is also invariant under all automorphisms of $G$ stabilizing $\mathcal{X}$ set-wise.

If $\mathcal{I}$ has a unique minimal element $H$, then $H$ is commensurable with any $X\in\mathcal{X}$ and invariant under all automorphisms stabilizing $\mathcal{X}$ set-wise, and the proof is done.

Otherwise, we do a dual construction with the family $\mathcal{I}$ to get another family of uniformly commensurable approximate subgroups which is closed under finite unions.
\begin{rem}
In the example $\mathcal{X}:=\{X_{m,\epsilon,\eta}:m\in\mathbb{N},\epsilon,\eta\in\mathcal{E}\}$ discussed in Section \ref{sec2}, every $Z\in\mathcal{Z}$ is strong and $N(Z)=\bigcup\{[-1-\epsilon,1+\epsilon]:\epsilon\in \mathcal{E}\}$. Hence, $\mathcal{I}$ has a unique minimal element, which is exactly the one we found in Claim \ref{claim}.
\end{rem}

Now we start the dual construction.

Since $\mathcal{I}$ is uniformly $M(KN)^2$-commensurable, $[I:J]\leq M(KN)^2$ for all $I,J\in\mathcal{I}$ by Lemma \ref{lem-cover}. Define $m':=\min_{I\in\mathcal{I}}\max\{[I:J]:J\in\mathcal{I}'\},$ and $\mathcal{I}_{m'}:=\{I\in\mathcal{I}:\max\{[I:J]:J\in\mathcal{I}'\}=m'\}.$ If $I\subseteq I'$ with $I'\in\mathcal{I}_{m'}$ and $I\in\mathcal{I}$, then $$\max\{[I:J]:J\in\mathcal{I}'\}\leq \max\{[I':J]:J\in\mathcal{I}'\}=m'.$$ By minimality of $m'$, $\max\{[I:J]:J\in\mathcal{I}'\}=m'$. Hence, $I\in\mathcal{I}_{m'}$.

Fix $I\in\mathcal{I}_{m'}$. Let $T\in\mathcal{I}'$ with $[I:T]=m'.$ Let $(x_1T,\ldots,x_{m'}T)$ be a maximal disjoint family in $\{xT:x\in I\}$. For any $J\supseteq I$ and $J\in\mathcal{I}_{m'}$, we have $(x_1T,\ldots,x_{m'}T)$ must also be maximal disjoint in $\{yT:y\in J\}$. Therefore, $J\subseteq \bigcup_{1\leq i\leq m'}x_iT^2$ and $\bigcup\{J\supseteq I,~J\in\mathcal{I}_{m'}\}\subseteq \bigcup_{1\leq i\leq m'}x_iT^2.$ 

Let $\mathcal{Y}:=\{\bigcup_{i\leq n}J_i:J_i\in\mathcal{I}_{m'} \text{ and }n\in\mathbb{N}\}.$ For any $n\in\mathbb{N}$ and $J_0,\ldots,J_n\in\mathcal{I}_{m'}$, there is some $I\in\mathcal{I}$ such that $\bigcap_{i\leq n}J_i\supseteq I$ by Lemma \ref{lem-intersec}. As $J_i\in\mathcal{I}_{m'}$ we have $I\in\mathcal{I}_{m'}$. Therefore, $\bigcup_{i\leq n}J_i\subseteq \bigcup\{J\supseteq I,~J\in\mathcal{I}_{m'}\}$. 

\begin{lemma}$\mathcal{Y}$ is a uniformly commensurable family and any $Y\in\mathcal{Y}$ is commensurable with $\mathcal{X}$.
\end{lemma}
\begin{proof}
Let $Y,Y'\in\mathcal{Y}$.
Suppose $Y=\bigcup_{i\leq n}J_i$ and $Y'=\bigcup_{i\leq n'}J'_i$. By the argument before, there are $I\in\mathcal{I}_{m'}$, $T\in\mathcal{I}'$ and $M\subseteq G$ with $|M|\leq m'$ such that $Y\subseteq \bigcup\{J\supseteq I,~J\in\mathcal{I}_{m'}\}\subseteq MT^2.$ Since $\mathcal{I}$ is a family of uniformly $M(KN)^2$-commensurable $(MKN)^2$-approximate subgroups, $T\in\mathcal{I}'\subseteq \mathcal{I}$ and $J_0'\in\mathcal{I}$, there are $M_1,M_2$ with $|M_1|\leq (MKN)^2$ and $|M_2|\leq M(KN)^2$ such that $T^2\subseteq M_1T$ and $T\subseteq M_2 J_0'$. Thus, $$Y\subseteq MT^2\subseteq MM_1T\subseteq MM_1M_2J_0'\subseteq MM_1M_2(\bigcup_{i\leq n'}J_i')=MM_1M_2Y'.$$ 

Let $N_Y:=m'M^3(NY)^4$. Then $\mathcal{Y}$ is uniformly $N_Y$-commensurable.

By the above argument, for any $\bigcup_{i\leq n}J_i=Y\in\mathcal{Y}$ there is $T\in\mathcal{I'}\subseteq\mathcal{I}$ such that $Y$ is contained in $m'(MKN)^2$-translates of $T$. As $J_i\in\mathcal{I'}$ is commensurable with $T$ and $J_i\subseteq Y$, $Y$ is commensurable with $T$. Hence, $Y$ is commensurable with $I$. As $\mathcal{I}$ is commensurable with $\mathcal{X}$ by Lemma \ref{lem-I}, $Y$ is commensurable with $\mathcal{X}$.
\end{proof}

Note that any $Y=\bigcup_{i\leq n}J_i \in \mathcal{Y}$ is symmetric and contains the identity. Moreover, as $\mathcal{I}$ is a family of uniformly $M(KN)^2$-commensurable $(MKN)^2$-approximate subgroups, $$Y^2=\bigcup_{i,j\leq n}J_iJ_j\subseteq \bigcup_{i,j\leq n}T_{ij}(J_j)^2\subseteq \bigcup_{i,j\leq n}T_{ij}T_j J_j\subseteq (\bigcup_{i,j\leq n}T_{ij}T_j)Y$$  where $|T_{ij}|\leq M(KN)^2$ and $|T_j|\leq (MKN)^2$ for $i,j\leq n$. Therefore, $Y$ is an approximate subgroup. 
We conclude that $\mathcal{Y}$ is a family of approximate subgroups which are uniformly commensurable and closed under finite unions. 

For any $X=X^{-1}\subseteq G$ define $\langle X\rangle:=\bigcup_{k\in\mathbb{N}}X^k$, that is the group generated by $X$.
\begin{lemma}\label{lem-chain}
There is no $N_Y+1$-chain $\langle Y_1\rangle\lneq\langle Y_2\rangle\lneq\cdots\lneq\langle Y_{N_Y+1}\rangle$ with $Y_i\in\mathcal{Y}$.
\end{lemma}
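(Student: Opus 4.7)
The plan is to first reduce to the case where the sets $Y_i$ themselves form an ascending chain (not just the subgroups they generate), and then derive a contradiction by counting left cosets of $H:=\langle Y_0\rangle$. For the reduction I would replace each $Y_i$ with $\tilde Y_i := Y_0 \cup Y_1 \cup \cdots \cup Y_i$. Since $\mathcal{Y}$ is closed under finite unions, $\tilde Y_i \in \mathcal{Y}$. Since $\langle Y_j\rangle \subseteq \langle Y_i\rangle$ for all $j \leq i$, one has $\langle \tilde Y_i\rangle = \langle Y_i\rangle$, so the strict chain is preserved. Hence I may assume $Y_0 \subseteq Y_1 \subseteq \cdots \subseteq Y_{N_Y}$ from the outset.

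Next, let $c_i$ denote the number of distinct left cosets of $H$ that meet $Y_i$. Uniform $N_Y$-commensurability of $\mathcal{Y}$ yields $Y_i \subseteq Z_i Y_0 \subseteq Z_i H$ for some $|Z_i|\leq N_Y$, so $c_i \leq N_Y$ for every $i$. Moreover $Y_i \subseteq Y_{i+1}$ gives $Y_iH \subseteq Y_{i+1}H$, and hence $c_i \leq c_{i+1}$. The crucial point is that this inequality is strict: if $c_i = c_{i+1}$, then $Y_iH$ and $Y_{i+1}H$, being unions of the same finite number of left cosets of $H$ with one contained in the other, must coincide. This forces $\langle Y_i\cup H\rangle = \langle Y_{i+1}\cup H\rangle$; but $H \subseteq \langle Y_i\rangle$ since $Y_0 \subseteq Y_i$, so $\langle Y_i\cup H\rangle = \langle Y_i\rangle$ and likewise $\langle Y_{i+1}\cup H\rangle = \langle Y_{i+1}\rangle$, contradicting the strict inclusion $\langle Y_i\rangle \lneq \langle Y_{i+1}\rangle$.

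From $c_0 = 1$ (as $Y_0 \subseteq H$) and $c_0 < c_1 < \cdots < c_{N_Y}$ one obtains $c_{N_Y} \geq N_Y + 1$, contradicting the bound $c_{N_Y} \leq N_Y$. The only delicate points I anticipate are verifying that the reduction genuinely lies in $\mathcal{Y}$ (immediate from closure under finite unions) and the identification $\langle Y_i \cup H\rangle = \langle Y_i\rangle$ used in the strictness step; the counting itself is elementary, and the bound $N_Y + 1$ in the statement is precisely what such a pigeonhole-type argument yields.
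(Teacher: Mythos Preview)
Your argument is correct and rests on the same pigeonhole idea as the paper: uniform $N_Y$-commensurability caps at $N_Y$ the number of disjoint translates one can extract, while a strict chain of length $N_Y+1$ forces $N_Y+1$ of them. The execution differs slightly. You first replace the $Y_i$ by the nested $\tilde Y_i=Y_0\cup\cdots\cup Y_i$ and then count left cosets of the \emph{generated subgroup} $H=\langle Y_0\rangle$ meeting each $\tilde Y_i$, showing this count strictly increases. The paper skips the reduction: it picks $y_i\in Y_{i+1}\setminus\langle Y_i\rangle$ directly, shows the $N_Y+1$ translates $\{y_iY_0:-1\le i<N_Y\}$ (with $y_{-1}=\mathrm{id}$) are pairwise disjoint using the chain structure, observes all $y_i$ lie in $\bigcup_{i\le N_Y}Y_i\in\mathcal{Y}$, and invokes Lemma~\ref{lem-cover}. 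Your route through $H$-cosets is a mild detour but makes the strictness step transparent; the paper's route is shorter and stays entirely at the level of $Y_0$-translates.
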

\begin{proof}
Suppose, towards a contradiction, that there is such a chain. Then for each $1\leq i\leq N_Y$ there is some $y_i\in Y_{i+1}\setminus \langle Y_i\rangle$. Therefore, $y_i\langle Y_i\rangle\cap \langle Y_i\rangle=\emptyset$. Let $y_{0}:=1_G$. We claim that $(y_iY_1:0\leq i\leq N_Y)$ is a disjoint family. Indeed, for any $i<j$, we have $y_j\langle Y_j\rangle\cap \langle Y_j\rangle=\emptyset$ and $y_iY_1\subseteq\langle Y_{i+1}\rangle\subseteq \langle Y_j\rangle$. Therefore, $y_jY_1\cap y_iY_1=\emptyset$. By assumption, $Y_1$ should be $N_Y$-commensurable with $\bigcup_{i\leq N_Y}Y_i\in \mathcal{Y}$. This contradicts Lemma \ref{lem-cover}.
\end{proof}

By Lemma \ref{lem-chain}, the family $\{\langle Y\rangle:Y\in\mathcal{Y}\}$ has a maximal element $G_{\text{max}}:=\langle Y_{\text{max}}\rangle$ for some $Y_{\text{max}}\in\mathcal{Y}$. By maximality, $G_{\text{max}}\supseteq \bigcup_{Y\in\mathcal{Y}} Y$. 

\begin{lemma}
There is some $n_1\in\mathbb{N}$ such that $Y\subseteq (Y_{\textup{max}})^{n_1}$ for all $Y\in\mathcal{Y}$.
\end{lemma}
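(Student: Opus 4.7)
My plan is to prove the stronger statement that the entire union $V := \bigcup_{Y \in \mathcal{Y}} Y$ sits inside a single power $(Y_{max})^{n_1}$, which immediately gives $Y \subseteq (Y_{max})^{n_1}$ for every $Y \in \mathcal{Y}$. Note that $V \subseteq G_{max}$ is already recorded in the paragraph preceding this lemma, and $G_{max} = \bigcup_{j \in \mathbb{N}} (Y_{max})^j$ since $Y_{max}$ is symmetric and contains the identity.

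The first and main step is to upgrade the pointwise uniform $N_Y$-commensurability of $\mathcal{Y}$ to the global bound $[V : Y_{max}] \leq N_Y$ in the sense of Definition \ref{def-index}. Given finitely many $v_1, \ldots, v_k \in V$ with $\{v_i Y_{max}\}_{i \leq k}$ pairwise disjoint, I choose $Y_i \in \mathcal{Y}$ with $v_i \in Y_i$; closure of $\mathcal{Y}$ under finite unions gives $\bigcup_i Y_i \in \mathcal{Y}$, and uniform commensurability then provides $M \subseteq G$ with $|M| \leq N_Y$ and $\bigcup_i Y_i \subseteq M Y_{max}$. Applying Lemma \ref{lem-cover} to $X := \bigcup_i Y_i$, $Y := Y_{max}$ and $Z := M$ forces $k \leq |M| \leq N_Y$. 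I then pick a maximal disjoint family $\{v_1 Y_{max}, \ldots, v_k Y_{max}\}$ with $v_i \in V$ and $k \leq N_Y$: maximality together with $Y_{max} = Y_{max}^{-1}$ shows that every $v \in V$ satisfies $v Y_{max} \cap v_i Y_{max} \ne \emptyset$ for some $i$, hence $v \in v_i (Y_{max})^2$, and therefore $V \subseteq \{v_1, \ldots, v_k\} (Y_{max})^2$. Since $Y_{max} \in \mathcal{Y}$ is itself an approximate subgroup, there is a finite $F \subseteq G$ with $(Y_{max})^2 \subseteq F Y_{max}$, so $V \subseteq S Y_{max}$ where $S := \{v_1, \ldots, v_k\} F$ is a finite subset of $G_{max}$. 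Finiteness of $S$ together with $G_{max} = \bigcup_{j} (Y_{max})^j$ yields some $N \in \mathbb{N}$ with $S \subseteq (Y_{max})^N$, and then $V \subseteq (Y_{max})^{N+1}$, so $n_1 := N+1$ works.

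The only nontrivial step is the first one: converting the commensurability of each individual $Y \in \mathcal{Y}$ with $Y_{max}$ into a uniform disjointness bound on the whole union $V$. This is exactly where the closure of $\mathcal{Y}$ under finite union is indispensable, because it lets us test disjointness inside $V$ by pulling everything back into one member of $\mathcal{Y}$ and then applying Lemma \ref{lem-cover}. Once this is done, the remainder is just the standard maximal-disjoint-family cover (mirroring Lemma \ref{lem-xx}), one use of the approximate-subgroup property of $Y_{max}$, and the trivial pigeonhole that a finite subset of the ascending union $\bigcup_j (Y_{max})^j$ is contained in some finite stage.
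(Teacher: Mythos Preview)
Your proof is correct, and the route is a direct variant of the paper's argument by contradiction. Both rely on the same two ingredients---closure of $\mathcal{Y}$ under finite unions together with uniform $N_Y$-commensurability, invoked through Lemma~\ref{lem-cover}---but deploy them differently. The paper assumes no bound exists and inductively produces elements $a_0,\ldots,a_{N_Y}$ lying in successively higher powers of $Y_{max}$ (with $a_i\in(Y_{max})^{\ell_i}\setminus(Y_{max})^{\ell_{i-1}+2}$), then checks that the $a_iY_{max}$ are pairwise disjoint inside $\bigcup_i Y_i\in\mathcal{Y}$, contradicting $N_Y$-commensurability. You instead establish $[V:Y_{max}]\le N_Y$ up front, pick a maximal disjoint family, and cover $V$ by finitely many translates of $(Y_{max})^2$; your version is arguably cleaner since it avoids the bookkeeping with the $\ell_i$.

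One small point to tighten: you assert that $S=\{v_1,\ldots,v_k\}F$ lies in $G_{max}$, but the covering set $F$ from the approximate-subgroup condition $(Y_{max})^2\subseteq FY_{max}$ is a priori just a finite subset of $G$. Either note that after discarding redundant elements one has $F\subseteq (Y_{max})^2(Y_{max})^{-1}=(Y_{max})^3\subseteq G_{max}$, or---simpler---drop this step entirely: from $V\subseteq\{v_1,\ldots,v_k\}(Y_{max})^2$ and $v_i\in V\subseteq G_{max}=\bigcup_j(Y_{max})^j$ you already get each $v_i\in(Y_{max})^{j_i}$, hence $V\subseteq(Y_{max})^{\max_i j_i+2}$.
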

\begin{proof}
Suppose not, then there is some $Y_0\in\mathcal{Y}$ and $a_0\in Y_0$ such that $a_0\not\in Y_{\text{max}}$. As $G_{\text{max}}=\langle Y_{\text{max}}\rangle\supseteq Y_0$, there is $\ell_0$ with $a_0\in (Y_{\text{max}})^{\ell_0}$. By assumption, there is some $Y_1\in\mathcal{Y}$ and $a_1\in Y_1$ with $a_1\not\in (Y_{\text{max}})^{\ell_0+2}$. Since $Y_1\subseteq \langle Y_{\text{max}}\rangle$, we have $a_1\in (Y_{\text{max}})^{\ell_1}$ for some $\ell_1>\ell_0+2$. Repeating this procedure, $(Y_i)_{0\leq i\leq N_Y}$, $(a_i)_{0\leq i\leq N_Y}$ and $\ell_0<\ell_1<\cdots<\ell_{N_Y}$ such that $Y_i\in\mathcal{Y}$ and $a_i\in Y_i$, and moreover: $a_i\in (Y_{\text{max}})^{\ell_i}$ and $a_i\not\in (Y_{\text{max}})^{\ell_{i-1}+2}$.

Consider $\{a_iY_{\text{max}}:0\leq i\leq N_Y\}$. For any $i<j$, if $a_iY_{\text{max}}\cap a_jY_{\text{max}}\neq\emptyset$, then $a_j\in a_i(Y_{\text{max}})^2$ since $Y_{\text{max}}$ is closed under inverses. As $a_i\in (Y_{\text{max}})^{\ell_i}$, $a_j\in (Y_{\text{max}})^{\ell_i+2}\subseteq (Y_{\text{max}})^{\ell_{j-1}+2}$, a contradiction. Therefore, $(a_iY_{\text{max}}:0\leq i\leq N_Y)$ is a disjoint family. Let $Y':=\bigcup_{0\leq i\leq N_Y}Y_i$, then $Y'\in\mathcal{Y}$ but is not $N_Y$-commensurable with $Y_{\text{max}}$, which contradicts our assumption.
\end{proof}

Now we will consider a subfamily of $\mathcal{I}_{m'}$ which is invariant under all automorphisms of $G$ stabilizing $\mathcal{X}$ set-wise.

Let $$n_2:=\min\{n(Z):N(Z)\in\mathcal{I}_{m'}\},$$ and $$\mathcal{Y}':=\{N(Z)\in\mathcal{I}_{m'}:n(Z)=n_2\}.$$
Note that $\mathcal{Y}'\subseteq\mathcal{Y}$.

Let $H:=\bigcup\mathcal{Y'}\subseteq\bigcup\mathcal{Y}\subseteq (Y_{\text{max}})^{n_1}$. Then $H$ is invariant under all automorphisms stabilizing $\mathcal{X}$, since $\mathcal{Y}'$ is. Moreover, as $Y_{\text{max}}$ is an approximate subgroup commensurable with any $X\in\mathcal{X}$, $H$ is commensurable with $\mathcal{X}$. It is also an approximate subgroup as $Y_{\text{max}}$ is. This ends the proof of the Main Theorem. 


\section{Uniform Bound}
The aim of this section is to prove Corollary \ref{cor}. The strategy is that if we assume the bound does not exist, then we can build a counterexample using ultraproducts. To do this, we need some basic first-order logic and definability of $H$ constructed from $\mathcal{X}$ in the Main Theorem.

\begin{lemma}\label{lem-def}
Let $\mathcal{L}$ be a first-order language which contains the group language. Let $\mathcal{M}$ be an $\mathcal{L}$-structure expanding a group $G$. Suppose that $\mathcal{X}$ is a uniform family of commensurable approximate subgroups in $G$ and that $\mathcal{X}$ is uniformly definable in $\mathcal{M}$ by a formula $\phi(x;\bar{y})$. That is, $$\mathcal{X}=\{\phi(G,\bar{b}):\bar{b}\in\mathcal{M}^{|\bar{y}|}\}.$$ Let $H$ be the invariant approximate subgroup obtained by the Main Theorem. Then $H$ is also definable by a formula $\psi_{\mathcal{X},\phi}(x)$.

\end{lemma}
\begin{proof}
By assumption $\mathcal{X}$ is uniformly definable. Hence, so is $\mathcal{X}^2$, but neither are $\mathcal{Z}$ or $\mathcal{Z}_m$. However, knowing $m,k_0$ and $n_0$, the family of strong $Z$ with $n(Z)=n_0$ is uniformly definable. Given $m$, $k_0$ and a strong $Z$, we have that $\eta(Z)$ is definable, hence $N(Z)$ is also definable. Therefore, $\mathcal{I}'$ is uniformly definable. Similarly, knowing $m'$ and $n_2$ additionally, $\mathcal{Y}'$ is uniformly definable, thus $H$ is definable by a formula $\varphi_{\mathcal{X},\phi}(x)$. 

\end{proof}

\begin{rem}
Unlike the case of groups, $H$ is not obtained by finite operations. The defining formula for $H$ involves additional existential and universal quantifiers. As shown by the example in Section \ref{sec2}, the existential or universal quantifier is necessary. 
\end{rem}

\begin{proof}(Proof of Corollary \ref{cor})
Fix $K$ and $N$. Suppose that Corollary \ref{cor} fails. Then for any $n\in\mathbb{N}$, there is a group $G_n$ and a family of uniformly $N$-commensurable $K$-approximate subgroups $\mathcal{X}_n$ such that there is no $H$ which is an $n$-approximate subgroup $n$-commensurable with $\mathcal{X}_n$ invariant under all automorphisms stabilizing $\mathcal{X}_n$ set-wise. 

Let $\mathcal{L}$ be the language $((G,1_G,\cdot),I,R)$ which contains two sorts $G$ and $I$ and a relation $R\subseteq G\times I$ where $G$ is equipped with a group language. We interpret $(G_n,\mathcal{X}_n)$ as $\mathcal{L}$-structures by:
\begin{itemize}
\item
Interpret the first sort as $G_n$ with the group operation;
\item
Let $I_n$ be an index set such that there is a bijection $\tau:I_n\to \mathcal{X}_n$. Interpret the second sort as $I_n$ and $R:G_n\times I_n$ as $R(g,i)$ if and only if $g\in\tau(i)$.
\end{itemize}

Let $(G,\mathcal{X}):=\prod_{n\in\mathbb{N}}(G_n,\mathcal{X}_n)/\mathcal{U}$ be an ultraproduct of $\{(G_n,\mathcal{X}_n):n\in\mathbb{N}\}$ (seen as $\mathcal{L}$-structures) where $\mathcal{U}$ is a non-principal ultrafilter over $\mathbb{N}$. Note that $\mathcal{X}$ is a family of uniformly $N$-commensurable $K$-approximate subgroups in $G$, and $\mathcal{X}$ is uniformly definable by $R(x,i)$. By the Main Theorem, there is an $L$-approximate subgroup $H$ that is $M$-commensurable with $\mathcal{X}$ and invariant under all automorphisms stabilising $\mathcal{X}$ set-wise. By Lemma \ref{lem-def}, $H$ is definable. By \L os's Theorem $H$ is an ultraproduct of $\{H_n:n\in\mathbb{N}\}$ along $\mathcal{U}$, and the set $J:=\{n\in\mathbb{N}:n>\max\{N,L\}, H_n$ is an $L$-approximate subgroup $M$-commensurable with $\mathcal{X}_n\}$ is in the ultrafilter $\mathcal{U}$. For any $n\in J$, as $n>\max\{N,L\}$, $H_n$ is also an $n$-approximate subgroup $n$-commensurable with $\mathcal{X}_n$. Therefore, there is an automorphism $\sigma_n$ of $G_n$ which fixes $\mathcal{X}_n$ set-wise, but $\sigma_n(H_n)\neq H_n$. For $n\in\mathbb{N}\setminus J$ define $\sigma_n:=id$, that is the identity automorphism on $G_n$. Let $\sigma$ be the ultraproduct of $\{\sigma_n:n\in\mathbb{N}\}$ along $\mathcal{U}$. Then $\sigma$ is an automorphism of $G$ fixing $\mathcal{X}$ set-wise, but $\sigma(H)\neq H$, contradiction.


\end{proof}

\subsection*{Acknowledgment}
The author wants to thank her supervisor Frank Wagner for suggesting this interesting topic and for his brilliant idea of going to the $2^k$-fold product of approximate subgroups to make the key lemma, Lemma \ref{lem-key}, work. She also wants to thank the anonymous referee for lots of valuable advices and comments.

\end{document}